\begin{document}

\title{Tropical Incidence Relations, Polytopes, and Concordant Matroids}
\author{Mohammad Moinul Haque}
\maketitle

\newtheorem{defn}{Definition}
\newtheorem{lem}{Lemma}
\newtheorem{propn}{Proposition}
\newtheorem{thm}{Theorem}
\newtheorem{cor}{Corollary}
\newtheorem{conj}{Conjecture}
\newtheorem{ex}{Example}
\newcommand{\sub}{\backslash}
\newcommand{\B}{\mathcal{B}}

\theoremstyle{remark}
\newtheorem{rem}{Remark}

\begin{abstract}
In this paper, we develop a tropical analog of the classical flag variety that we call the flag Dressian.  We find relations, which we call ``tropical incidence relations'', for when one tropical linear space is contained in another, and show that the flag Dressian is a tropical prevariety.  In the case of 2-step flag Dressians, which we call ``tropical incidence prevarieties'', we find an equivalence between points in this space and induced subdivisions of a hypersimplex, generalizing two parts of an equivalence given by D. Speyer for tropical linear spaces (Proposition 2.2 of \cite{Speyer}).  We attempt to generalize the third part of Speyer's equivalence to concordant matroids and obtain some partial results.
\end{abstract}

\section{Introduction}

\subsection{Background}

Tropical geometry concerns the study of tropical spaces, which are certain weighted polyhedral complexes in $\mathbb{R}^{n}$.  Tropical spaces are analogs of algebraic varieties that are useful for being able to carry information about varieties despite being geometrically simpler.  In fact, there is a method for directly converting algebraic spaces into tropical ones called ``tropicalization.'' In addition, tropical spaces have interesting and useful combinatorial properties. One example of such a space is a tropical linear space, a tropical analog of a classical linear space.

In \cite{SS} tropical linear spaces are introduced in the context of $Trop(G(d,n))$, the tropical Grassmannian, the tropical analog of the classical Grassmannian $G(d,n)$.  However, the tropical Grassmannian only parametrizes \textit{realizable} tropical linear spaces, namely, those that can be obtained by tropicalizing classical linear spaces.  It is the \textit{Dressian}, denoted $Dr(d,n)$, which parametrizes all $d$-dimensional tropical linear spaces in $n$-dimensional tropical projective space, and is introduced in (\cite{TropPlane}).  While $Trop(G(2,n))\cong Dr(2,n)$, the Dressian is in general larger than the Grassmannian for the same values of $d$ and $n$ (for a given $d$, $d>2$, the dimension of the Dressian increases quadratically in $n$, but only linearly for the Grassmannian, [Theorem 3.6 of \cite{TropPlane}]).

In (\cite{Speyer}) Speyer discusses tropical linear spaces and introduces tropical Plücker vectors.  Given a collection of real numbers $\{p_{J}; J\subset[n],\;|J|=d\}$ satisfying:
\[
(p_{J})\in Trop(P_{S_{ij}}P_{S_{kl}}-P_{S_{ik}}P_{S_{jl}}+P_{S_{il}}P_{S_{jk}})
\]
for every $S\in{[n]\choose{d-2}}$ and $i,j,k$ and $l$ distinct elements of $[n]\backslash S$, we call such a set of numbers a \textbf{tropical Plücker vector}, and the numbers themselves \textbf{tropical Plücker coordinates}.  To each vertex $e_{i_{1}}+\cdots+e_{i_{d}}$ of $\Delta(d,n)$ we associate the number $p_{i_{1},\ldots,i_{d}}$, which we use to take the lower subdivision called $\mathcal{D}$(Section 2 of \cite{Speyer}).  He has a particular characterization of linear spaces which he gives in a proposition (Proposition 2.2 of \cite{Speyer}):

\begin{propn}
\label{Sp}
The following are equivalent:

1. $p_{i_{1},\ldots,i_{d}}$ are tropical Plücker coordinates

2. The one skeleta of $\mathcal{D}$ and $\Delta(d,n)$ are the same.

3. Every face of $\mathcal{D}$ is matroidal.
\end{propn}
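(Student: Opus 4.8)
My plan is to establish $2\Leftrightarrow 3$ directly (this part is purely combinatorial and rests on the Gelfand--Goresky--MacPherson--Serganova description of matroid polytopes), then $2\Rightarrow 1$ by a local analysis on an octahedral face, and finally $1\Rightarrow 3$ by reduction to a matroid basis-exchange statement; together these give the full equivalence. Two preliminary facts are used throughout. First, since $\Delta(d,n)\subseteq[0,1]^{n}$ and every lattice point of it has coordinate sum $d$, the lattice points of $\Delta(d,n)$ are exactly its vertices $e_{i_{1}}+\cdots+e_{i_{d}}$; hence $\mathcal D$, being the regular subdivision induced by heights placed on those vertices, has vertex set equal to that of $\Delta(d,n)$, and every cell of $\mathcal D$ is a $0/1$-polytope contained in $\Delta(d,n)$. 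Second, by the theorem of Gelfand, Goresky, MacPherson and Serganova (GGMS), a $0/1$-polytope $P\subseteq\Delta(d,n)$ is a matroid basis polytope if and only if every edge of $P$ is parallel to a vector $e_{a}-e_{b}$; and since the edges of $\Delta(d,n)$ are precisely the segments $[e_{I},e_{I'}]$ with $|I\triangle I'|=2$ (direction $e_{a}-e_{b}$ for $\{a,b\}=I\triangle I'$), for such a $P$ an edge is parallel to some $e_{a}-e_{b}$ exactly when it is an edge of $\Delta(d,n)$.

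For $2\Leftrightarrow 3$: an edge of $\Delta(d,n)$ contains no lattice point in its interior, so it is never cut by $\mathcal D$ and therefore remains an edge of $\mathcal D$; thus the one-skeleta of $\mathcal D$ and $\Delta(d,n)$ coincide iff $\mathcal D$ introduces no new edge, i.e.\ iff every edge of $\mathcal D$ is an edge of $\Delta(d,n)$. By the second preliminary fact this says that every edge of every cell of $\mathcal D$ is parallel to some $e_{a}-e_{b}$, which by GGMS is equivalent to every cell of $\mathcal D$ being matroidal.

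For $2\Rightarrow 1$: fix $S\in\binom{[n]}{d-2}$ and distinct $i,j,k,l\in[n]\sub S$. The six vertices $e_{S\cup\{a,b\}}$, $\{a,b\}\subset\{i,j,k,l\}$, span an octahedral face $O$ of $\Delta(d,n)$, namely $\Delta(d,n)\cap\{x_{m}=1\ \forall m\in S\}\cap\{x_{m}=0\ \forall m\notin S\cup\{i,j,k,l\}\}$, and $\mathcal D$ restricts to the regular subdivision of $O$ induced by the heights $p_{S\cup\{a,b\}}$. The only pairs of non-adjacent vertices of $O$ are its three long diagonals, so hypothesis $2$ forces none of them to be an edge of $\mathcal D|_{O}$. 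A short computation settles the octahedron: the lift of the centre of $O$ over the diagonal joining $e_{S\cup\{a,b\}}$ and $e_{S\cup\{c,d\}}$ sits at height $\tfrac12(p_{S\cup\{a,b\}}+p_{S\cup\{c,d\}})$, and the lower hull picks out one of the three diagonals as an edge exactly when the minimum of the three half-sums is attained uniquely; when it is attained at least twice, the cell through the centre is a square or the whole octahedron and introduces no new edge. Hence the minimum of $p_{S\cup\{i,j\}}+p_{S\cup\{k,l\}}$, $p_{S\cup\{i,k\}}+p_{S\cup\{j,l\}}$, $p_{S\cup\{i,l\}}+p_{S\cup\{j,k\}}$ is attained at least twice, which is exactly the three-term relation indexed by $(S;i,j,k,l)$, so the $p$'s are tropical Pl\"ucker coordinates.

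For $1\Rightarrow 3$: every cell of $\mathcal D$ has the form $\operatorname{conv}\{e_{J}:\sum_{j\in J}w_{j}+p_{J}\ \text{is minimal}\}$ for some $w\in\mathbb R^{n}$, and replacing $(p_{J})$ by $(p_{J}+\sum_{j\in J}w_{j})$ preserves hypothesis $1$, since in the relation for $(S;i,j,k,l)$ all three terms pick up the same constant $2\sum_{s\in S}w_{s}+w_{i}+w_{j}+w_{k}+w_{l}$. So it suffices to show that if $(p_{J})$ is a tropical Pl\"ucker vector and $c=\min_{J}p_{J}$, then $\mathcal B:=\{J:p_{J}=c\}$ is the set of bases of a matroid, for then every cell of $\mathcal D$ is a matroid polytope. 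One verifies the basis-exchange axiom by a minimal-counterexample argument on $|A\sub B|$: take $A,B\in\mathcal B$ and $a\in A\sub B$ with $p_{A-a+b}>c$ for every $b\in B\sub A$, chosen with $|A\sub B|$ as small as possible. The case $|A\sub B|=1$ is immediate; the case $|A\sub B|=2$ follows from a single three-term relation, whose one term equal to $p_{A}+p_{B}=2c$ is then strictly below the other two (each of which contains a factor $p_{A-a+b}$ or $p_{A-a+b'}$ exceeding $c$), contradicting the relation. For $|A\sub B|\ge 3$ one uses minimality of the counterexample to replace $A$ or $B$ by a nearby base of $\mathcal B$ obtained from it by a double exchange, and then applies the three-term relation on the four indices involved. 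Closing this last case --- equivalently, establishing the standard fact that a tropical Pl\"ucker vector induces a matroid on its set of minimal coordinates --- is the step I expect to require the most care: it involves iterating the three-term relations and pinning down the counterexample so that one relation is forced to have a unique minimum. Combining the three implications yields $1\Leftrightarrow 2\Leftrightarrow 3$.
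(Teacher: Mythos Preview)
The paper does not actually prove this proposition: it is quoted in the introduction (and again in Section~4) as Proposition~2.2 of \cite{Speyer}, and the paper's own results (Theorem~\ref{main}, Corollary~\ref{intface}) are analogues of it for the two-step polytope $\Delta(p,q;n)$, not a proof of the original hypersimplex statement. So there is no ``paper's own proof'' to compare against; what you have written is essentially a reconstruction of Speyer's argument.

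Your implications $2\Leftrightarrow 3$ (via the GGMS edge criterion) and $2\Rightarrow 1$ (via the octahedral faces) are correct and are the standard route. The remaining implication is the substantive one, and this is precisely where your write-up becomes a sketch. Two comments. First, the octahedral analysis only gives $2\Rightarrow 1$, not $1\Rightarrow 2$: a new edge $e_{I}e_{J}$ of $\mathcal D$ with $|I\triangle J|\ge 6$ need not sit inside any octahedral face, so you genuinely need $1\Rightarrow 3$ (or an equivalent global argument) to close the cycle. Second, your induction on $|A\sub B|$ for $1\Rightarrow 3$ is on the right track but, as you yourself flag, the step $|A\sub B|\ge 3$ is not yet an argument. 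The phrase ``replace $A$ or $B$ by a nearby base of $\mathcal B$ obtained from it by a double exchange'' presupposes that such a nearby base exists in $\mathcal B$, which is exactly a (different) instance of the exchange property you are trying to prove; you need to organise the minimality carefully so that this auxiliary exchange is already covered. One way that works: take a counterexample $(A,B,a)$ with $|A\sub B|$ minimal. If some $a'\in A\sub B$, $a'\neq a$, admits an exchange $A-a'+b'\in\mathcal B$, apply minimality to $(A-a'+b',B,a)$ and feed the resulting base into the three-term relation on $S=A\setminus\{a,a'\}$ with indices $a,a',b',b$; one of the two remaining terms is then forced down to $2c$, yielding $A-a+b\in\mathcal B$ or $A-a+b'\in\mathcal B$. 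If instead \emph{every} $a'\in A\sub B$ fails exchange with $B$, run the same minimality argument with the roles of $A$ and $B$ reversed to manufacture a nearby base on the $B$ side, and finish with the analogous three-term relation. Until that case analysis is written out, the proof is incomplete at its only nontrivial step.
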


Here the Plücker coordinates are those of the tropical linear space, and $\mathcal{D}$ is the subdivision of $\Delta(d,n)$ induced by the Plücker coordinates.  He goes on to use matroid theory to characterize tropical linear spaces and uses the above proposition to determine properties of these spaces, such as the fact that stable intersections of tropical linear spaces is also a tropical linear space.

\subsection{Goals and Results}

The goals of this paper are threefold: The first is to introduce the flag Dressian, a tropical space parametrizing flags of tropical linear spaces.  Next, we show an equivalence between inclusion of tropical linear spaces and subdivisions of weight polytopes related to matroid polytopes.  Finally, we make a connection from tropical incidence to matroid concordance.

The first section is devoted to the flag Dressian, a moduli space of flags of tropical linear spaces, and thus a tropical analog of flag varieties.  This section provides the tropical incidence relations (Theorem 1 of section 2.3.2), which show that the flag Dressian is an intersection of tropical hyperplanes.  The remainder of the paper focuses on the case of flag Dressians where there is only one step in the flag, and which are termed ``tropical incidence prevarieties.''  This class of varieties alone provides us with some interesting results.

The following section introduces the weight polytope $\Delta$, which are related to tropical incidence varieties.  In particular, flags of tropical linear spaces induce subdivisions on $\Delta$.  This section characterizes these induced subdivisions uniquely as those that share the same
1-skeleton with $\Delta$ (Theorem 2 of section 3.1).  This theorem is an analog of statements 1 and 2 of Speyer's Proposition \ref{Sp} above for tropical incidence varieties.

The last section focuses on determining a relationship between the polynomial and polyhedral conditions in the first two sections to matroids.  While a complete analogy appears to be elusive, we obtain some partial results and a conjecture for what may be possible.

\section{Relations for a Flag of Tropical Linear Spaces}

Here we derive a set of relations for tropical flag prevarieties using duality.  We then compare the equations we get to the ones that generate classical flag varieties.  Our approach will be to first define a ``tropical duality" that converts Plücker coordinates of a space into Plücker coordinates of its orthogonal space.  We then use the definition of a tropical linear space given by Speyer and apply duality to derive relations for when one tropical linear space is contained in another, giving us relations for a tropical flag variety.

\subsection{Tropical Flag Prevarieties}

Before we can discuss tropical objects, we first have to discuss what it means for something to be ``tropical".  Tropical geometry essentially converts classical algebraic varieties into piecewise linear spaces in order to get at certain aspects of the geometry.  The method of \textbf{tropicalizing} works as follows: given a polynomial $f$ in $n$ variables over a field with valuation, say
\[
f=\sum a_{r}\textbf{X}^{\textbf{w}_{r}},
\]
where $\textbf{X}^{\textbf{w}_{r}}=X_{1}^{{w_{r}}_{1}}X_{2}^{{w_{r}}_{2}}\cdots X_{n}^{{w_{r}}_{n}}$ then we define the \textbf{tropical zero set}, denoted $Trop(f)$, to be the set of points in $\mathbb{R}^{n}$ such that the minimum of $\{val(a_{r})+\sum_{i=1}^{n} {w_{r}}_{i}\}$ is achieved at least twice, following the convention of \cite{Speyer} (depending on conventions, some use maximum).  There are other equivalent ways of defining tropicalization, such as taking valuations of polynomials defined over a function field.  See \cite{firststep} for a more complete discussion of tropical geometry, and \cite{Gat} for a survey of tropical geometry and the different types of tropical spaces.\\
\\
\emph{Convention}: We will use uppercase letters, such as $X$ to represent variables before tropicalizing, and lowercase letters, such as $x$ to represent tropicalized variables.\\
\\
We want to develop a notion of a moduli space representing a flag of tropical linear spaces.  First, let us make clear what we mean by a ``tropical linear space".  We use the definition of tropical linear space as given in \cite{Speyer} (following Proposition 2.2) as follows:

\begin{defn} 
\label{tropl}
Let $\{p_{J}; J\subset[n],\;|J|=d\}$ be a collection of real numbers satisfying the tropical plucker relations:
\[
(p_{J})\in Trop(P_{S_{ij}}P_{S_{kl}}-P_{S_{ik}}P_{S_{jl}}+P_{S_{il}}P_{S_{jk}})
\]
where $S\in{[n]\choose{d-2}}$ and $i,j,k$ and $l$ are distinct elements of $[n]\backslash S$.  We call such a set of numbers a \textbf{tropical Plücker vector}, and the numbers themselves \textbf{tropical Plücker coordinates}.  The \textbf{$(d-1)$-dimensional tropical linear space} with Plücker coordinates $P_{J}$ is the subset of $\mathbb{R}^{n}/(1,\ldots,1)$ given by
\begin{equation}
\label{troplinear}
\bigcap_{1\leq j_{1}<\cdots<j_{d+1}\leq n}Trop\left(\sum_{r=1}^{d+1}(-1)^{r}P_{j_{1}\cdots\hat{j_{r}}\cdots j_{d+1}}X_{j_{r}}\right).
\end{equation}
\end{defn}

Now that we have defined tropical linear spaces, we can then consider flags of tropical linear spaces.  As the name \textit{Dressian} has been suggested for the tropical prevariety parametrizing all tropical linear spaces of a given dimension (See \cite{TropPlane}), we propose the name \textit{flag Dressian} to differentiate it from the tropicalization of flag varieties.

\begin{defn}: Let the \textbf{flag Dressian}, denoted, $FD(d_{1},d_{2},\ldots,d_{s};n)$ be the moduli space of flags of tropical linear spaces in $\mathbb{R}^{n}/(1,\ldots,1)$; namely, the set of $s$-tuples $(X_{1},\ldots,X_{s}))$ such that each $X_{i}$ is a $(d_{i}-1)$-dimensional tropical linear space in $\mathbb{R}^{n}/(1,\ldots,1))$ and $X_{i}\subset X_{i+1}$ for every $i$.  We will show below that the flag Dressian is in fact a tropical variety.  In the case where $s=2$ we will also call the flag Dressian a \textbf{tropical incidence variety}.
\end{defn}

Note, however, that $FD(d_{1},d_{2},\ldots,d_{s};n)$ is not the tropicalization of the classical flag variety, $Trop(Fl(d_{1},d_{2},\ldots,d_{s};n))$.  In particular, an element $(L_{1},\ldots,L_{s})\in FD(d_{1},d_{2},\ldots,d_{s};n)$ is a flag of tropical linear spaces where none of the $L_{i}$ have to be realizable, but for $(K_{1},\ldots,K_{s})\in Trop(Fl(d_{1},d_{2},\ldots,d_{s};n))$ all of the $K_{i}$ must be realizable.  One may ask if it is possible there is $(X_{1},X_{2})\in FD(d_{1},d_{2};n)$ where each $X_{i}$ is realizable, but for any $A,B$ with $Trop(A)=X_{1}$ and $Trop(B)=X_{2}$ we have $(A,B)\not\in Fl(d_{1},d_{2};n)$, namely $A\not\subset B$ (so containment may not be realizable).

\subsection{Relations for Tropical Incidence Prevarieties}

Having defined the flag Dressian, in order to determine its structure, we need a condition for when one tropical linear space is contained in another.  In this case, we will develop relations on the Plücker coordinates that determine containment.  One of our main tools for determining these relations will be duality. 

\subsubsection{Duality}

\begin{lem}:
\label{pdual}
Let $\{x_{I}\}$ be tropical Plücker coordinates for a tropical linear space $X$ and define $x^{\perp}_{J}:=x_{[n]\backslash J}$.  Then $\{x^{\perp}_{J}\}$ are also tropical Plücker coordinates for a tropical linear space we denote by $X^{\perp}$.  We call $X^{\perp}$ the tropical \textbf{dual linear space}, or \textbf{orthogonal complement} to $X$.  Moreover, $X\mapsto X^{\perp}$ is an inclusion reversing bijection on tropical linear spaces.
\end{lem}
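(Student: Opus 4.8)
The plan is to verify three things in order: (i) the vector $x^{\perp}$ again satisfies the tropical Plücker relations, so that $X^{\perp}$ is a well-defined tropical linear space; (ii) $X\mapsto X^{\perp}$ is an involution, hence a bijection; and (iii) it reverses containment.

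For (i) I would argue by a direct relabelling of indices. Since $x^{\perp}_{J}=x_{[n]\setminus J}$ has index sets of size $n-d$, the relations to verify are indexed by a set $T\in{[n]\choose{(n-d)-2}}$ together with four distinct elements $a,b,c,e\in[n]\setminus T$, and they assert that the minimum of $x^{\perp}_{Tab}+x^{\perp}_{Tce}$, $x^{\perp}_{Tac}+x^{\perp}_{Tbe}$, $x^{\perp}_{Tae}+x^{\perp}_{Tbc}$ is attained at least twice. Put $S:=[n]\setminus(T\cup\{a,b,c,e\})$; then $|S|=d-2$, the set $S$ is disjoint from $\{a,b,c,e\}$, and $[n]\setminus(T\cup\{a,b\})=S\cup\{c,e\}$ (and similarly for the other two pairs). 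Substituting the definition of $x^{\perp}$ rewrites the three displayed sums as $x_{Sab}+x_{Sce}$, $x_{Sac}+x_{Sbe}$, $x_{Sae}+x_{Sbc}$ --- exactly the three terms of the tropical Plücker relation for $x$ attached to $S$ and the four-element set $\{a,b,c,e\}$ --- so the hypothesis on $x$ gives the desired double minimum. The assignment $(T;a,b,c,e)\mapsto(S;a,b,c,e)$ is a bijection between the two families of relations: rank-$d$ relations exist precisely when $2\le d\le n-2$, which is exactly the range in which rank-$(n-d)$ relations exist. Hence $x^{\perp}$ is a tropical Plücker vector and $X^{\perp}$ is a tropical linear space of dimension $(n-d)-1$.

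Part (ii) is formal: applying $\perp$ twice replaces $x_{I}$ by $x^{\perp}_{[n]\setminus I}=x_{[n]\setminus([n]\setminus I)}=x_{I}$, so $(X^{\perp})^{\perp}=X$; thus $\perp$ is an involution on tropical Plücker vectors and hence a bijection on the associated tropical linear spaces.

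For (iii), by (ii) it is enough to show $X\subseteq Y\Rightarrow Y^{\perp}\subseteq X^{\perp}$. When $X$ and $Y$ are realizable this is the classical linear-algebra fact that $A\subseteq B$ implies $B^{\perp}\subseteq A^{\perp}$ for annihilators, together with the observation that the Plücker vector of an annihilator is the complement-indexed Plücker vector up to signs, which do not affect the tropical data. In general I would pass to the underlying (valuated) matroids $p$ of $X$ and $q$ of $Y$ and use the matroid-theoretic reformulation of containment: $L_{p}\subseteq L_{q}$ is equivalent to $p$ being a (valuated-matroid) quotient of $q$, which one can check cone-by-cone on a common refinement of the two induced subdivisions, where it reduces to the classical statement that $\mathcal{B}(M)\subseteq\mathcal{B}(N)$ iff $M$ is a quotient of $N$. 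The complementation of index sets defining $\perp$ corresponds to matroid dualization, and dualization classically reverses the quotient order, so $p$ a quotient of $q$ gives $q^{\perp}$ a quotient of $p^{\perp}$, i.e.\ $Y^{\perp}\subseteq X^{\perp}$. I expect this last point --- giving a precise, self-contained combinatorial meaning to containment and checking its compatibility with complementation, without appealing to the incidence relations of Section 2.3 or the concordance results of the last section (which themselves depend on this lemma, so using them would be circular) --- to be the real obstacle; parts (i) and (ii) are essentially bookkeeping.
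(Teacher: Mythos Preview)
The paper does not actually prove this lemma; it simply refers the reader to the remarks preceding Proposition~2.12 in \cite{Speyer}. Your parts (i) and (ii) --- the index-relabelling verification of the three-term relations and the observation that complementation of index sets is an involution --- are correct and self-contained, and already supply more than the paper itself does.

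Part (iii), however, contains a concrete error on top of the gap you already acknowledge. The assertion that the cone-by-cone reduction ``reduces to the classical statement that $\mathcal{B}(M)\subseteq\mathcal{B}(N)$ iff $M$ is a quotient of $N$'' is false as written: when $M$ and $N$ have different ranks --- exactly the situation here --- their bases have different cardinalities, so $\mathcal{B}(M)\cap\mathcal{B}(N)=\emptyset$ and no nontrivial inclusion of basis collections is possible. Whatever local statement the reduction ought to land on, it is not this one; the correct local condition is that the two ordinary matroids form a quotient pair (concordance), which is not expressible as a containment of basis sets. And establishing \emph{that} equivalence between containment of tropical linear spaces and (valuated) matroid quotients is precisely what you worry is circular, since the paper's own route to it passes through Theorem~\ref{tropincl}, which in turn relies on the present lemma. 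Your instinct that matroid duality reverses the quotient order is correct, but you have not supplied a noncircular bridge from ``$X\subseteq Y$ as tropical linear spaces'' to ``the associated valuated matroids form a quotient pair.'' For a self-contained argument one would follow Speyer's dual description of $L(p)$ directly; as it stands, (iii) is a sketch resting on a mistaken intermediate claim.
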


\begin{proof}: See the remarks preceding Proposition 2.12 in \cite{Speyer}.
\end{proof}

\subsubsection{Tropical incidence relations}

We will now make use of (\ref{troplinear}) and duality to derive relations for a tropical incidence variety.  We will show that in fact, the relations that we derive for tropical incidence prevarieties are the same as the relations for classical incidence varieties.

\begin{thm}: 
\label{tropincl}
Given tropical linear spaces $X$ and $Y$ ($dim\;X\leq dim\;Y$), we have $X\subset Y$ if and only if the tropical Plücker coordinates of $X$ and $Y$ satisfy the incidence relations:
\[
Trop(\sum_{i\in T\backslash S}X_{S\cup\{i\}}Y_{T\backslash\{i\}}),
\]
where $S,T\subset[n]$ with $|S|=p-1$ and $|T|=q+1$.
\end{thm}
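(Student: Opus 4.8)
The plan is to unwind Definition~\ref{tropl} twice and to bridge the two unwindings with the tropical orthogonality between a tropical linear space and its dual. Write $\dim X = p-1$ and $\dim Y = q-1$ with $p \le q$, so that $X$ has tropical Pl\"ucker coordinates $x_{I}$ on the $p$-subsets $I \subseteq [n]$ and $Y$ has coordinates $y_{J}$ on the $q$-subsets; for $|S| = p-1$, $|T| = q+1$ and $i \in T\backslash S$ one has $|S \cup \{i\}| = p$ and $|T\backslash\{i\}| = q$, and moreover $|T\backslash S| \ge q-p+2 \ge 2$, so each relation in the statement genuinely involves at least two monomials. Since $Trop$ of a polynomial depends only on the supports and the valuations of its coefficients, the signs appearing in the classical incidence relations are invisible here, which is why the tropical relation may be stated sign-free.

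For the forward direction I would argue concretely. For a $(p-1)$-subset $S$ let $c^{S} \in (\mathbb{R}\cup\{\infty\})^{n}$ be the cocircuit of $X$ given by $c^{S}_{i} = x_{S\cup\{i\}}$ for $i \notin S$ and $c^{S}_{i} = \infty$ for $i \in S$; such cocircuits are themselves points of $X$, which follows from the tropical Pl\"ucker relations satisfied by $(x_{I})$. If $X \subseteq Y$ then each $c^{S}$ lies in $Y$ and so satisfies every defining condition of $Y$ from Definition~\ref{tropl}: for every $(q+1)$-subset $T$, the minimum of $y_{T\backslash\{i\}} + c^{S}_{i}$ over $i \in T$ --- equivalently over $i \in T\backslash S$, since the remaining terms are infinite --- is attained at least twice. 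This is exactly the incidence relation indexed by $(S,T)$.

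For the converse I would route the incidence relations through $X^{\perp}$. Given a $(q+1)$-subset $T$, let $c^{T} \in (\mathbb{R}\cup\{\infty\})^{n}$ have $c^{T}_{i} = y_{T\backslash\{i\}}$ for $i \in T$ and $c^{T}_{i} = \infty$ otherwise, so that the $T$-th defining tropical hyperplane of $Y$ is $\{w : \min_{i}(c^{T}_{i} + w_{i}) \text{ is attained at least twice}\}$. The key input is the tropical orthogonality duality implicit in the discussion around Proposition~2.12 of \cite{Speyer} (the same source used for Lemma~\ref{pdual}): for a vector $v$, the tropical linear space $X$ is contained in the tropical hyperplane defined by $v$ if and only if $v$ is a point of $X^{\perp}$; equivalently, $X^{\perp}$ is exactly the set of vectors tropically orthogonal to every point of $X$, where $v$ and $w$ are \emph{tropically orthogonal} when $\min_{i}(v_{i}+w_{i})$ is attained at least twice. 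Granting this, $X \subseteq Y$ if and only if $X$ lies in each defining hyperplane of $Y$, i.e.\ if and only if $c^{T} \in X^{\perp}$ for every $(q+1)$-subset $T$. Finally apply Definition~\ref{tropl} to $X^{\perp}$, whose tropical Pl\"ucker coordinates are $x^{\perp}_{K} = x_{[n]\backslash K}$ on the $(n-p)$-subsets $K$ by Lemma~\ref{pdual}: the condition $c^{T}\in X^{\perp}$ says that for every $(n-p+1)$-subset $K'$ the minimum over $i \in K'$ of $x^{\perp}_{K'\backslash\{i\}} + c^{T}_{i}$ is attained at least twice; here the finite terms are those with $i \in K'\cap T$, and for such $i$ that term equals $x_{([n]\backslash K')\cup\{i\}} + y_{T\backslash\{i\}}$. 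Writing $S := [n]\backslash K'$, which ranges over all $(p-1)$-subsets as $K'$ ranges over all $(n-p+1)$-subsets, and noting that $i \in K'\cap T$ is the same as $i \in T\backslash S$, this condition becomes exactly: the tropical Pl\"ucker coordinates lie in $Trop\!\big(\sum_{i \in T\backslash S} X_{S\cup\{i\}}\,Y_{T\backslash\{i\}}\big)$. Ranging over all pairs $(S,T)$ yields the stated equivalence.

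The principal obstacle is the orthogonality duality invoked above --- that $X^{\perp}$ is honestly the tropical orthogonal complement of $X$, not merely a combinatorial dual. One half, that $X$ being contained in the hyperplane of $v$ forces $v \in X^{\perp}$, is soft: it needs only that the cocircuits of $X$ are points of $X$. The other half, that tropical orthogonality to all cocircuits of $X$ forces tropical orthogonality to \emph{every} point of $X$, is the real content; it is the tropical counterpart of the fact that the row space and the cokernel of a matrix are genuine orthogonal complements, and it is there that one relies on the valuated-matroid duality underlying \cite{Speyer} (and \cite{SS}). A secondary, clerical point to watch is that the vectors $c^{S}$ and $c^{T}$ carry infinite coordinates, so the argument should be run in the compactification $(\mathbb{R}\cup\{\infty\})^{n}/(1,\ldots,1)$, with the routine verification that these infinite entries never change which minima are attained at least twice.
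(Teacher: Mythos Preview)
Your argument is correct and follows essentially the same duality route as the paper: write $Y$ as an intersection of tropical hyperplanes indexed by $(q+1)$-subsets $T$, pass to the dual so that containment in each hyperplane becomes membership of the vector $c^{T}$ in $X^{\perp}$, and then unwind Definition~\ref{tropl} for $X^{\perp}$ via $x^{\perp}_{K}=x_{[n]\setminus K}$ to obtain exactly the relations indexed by $(S,T)$ with $S=[n]\setminus K'$. Your write-up is in fact more careful than the paper's on two points --- you correctly set the absent coordinates of $c^{S}$ and $c^{T}$ to $\infty$ rather than $0$, and you isolate the one genuinely nontrivial ingredient (that $X$ lies in the tropical hyperplane of $v$ iff $v\in X^{\perp}$, not merely that $(\,\cdot\,)^{\perp}$ reverses inclusion) --- while your separate forward argument via cocircuits of $X$ is a harmless shortcut that the paper absorbs into its single iff chain.
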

\begin{proof}:  Let $q=dim\;Y$, then from (\ref{troplinear}) we know that $Y$ is the intersction over all sequences $J$ of the tropical hyperplane $H_{J}$ given by
\[
Trop\left(\sum_{j\in J}Y_{J\backslash{j}}W_{j}\right),
\]
(where the $W_{j}$ represent the variables).  Note that we can ignore the signs in front of the terms since we are tropicalizing.  We have $X\subset Y$ if and only if $X\subset H_{J}\;\forall J$ if and only if $H^{\perp}_{J}\subset X^\perp\;\forall J$.\\
\\
Let $p=dim\;X$, then we also have $X^{\perp}$ as the intersection of tropical hyperplanes $H_{I'}$ of the form
\[
Trop\left(\sum_{r=1}^{n-p+1}X^{\perp}_{i'_{1}\cdots\hat{i'_{r}}\cdots i'_{n-p+1}}W_{i'_{r}}\right).
\]
Let $H^{\perp}_{J}=Trop(V)$.  Using Lemma \ref{pdual} we can write the coordinates of $V$ as
\[
V_{j}=
\begin{cases}
Y_{J\backslash j} & \text{for} j\in J\\
0 & \text{for} j\not\in J.
\end{cases}
\]
Now $H^{\perp}_{J}\subset X^\perp\;\forall J$ if and only if $\forall J,I\: H^{\perp}_{J}\subset H_{I'}$, so plugging in the points $H^{\perp}_{J}$ into the equations for $H_{I'}$ gives
\begin{equation}
Trop\left(\sum_{r=1}^{n-p+1}X^{\perp}_{i'_{1}\cdots\hat{i'_{r}}\cdots i'_{n-p+1}}Y_{J_{i'_{r}}}\right)
\end{equation}
which, by Lemma \ref{pdual}, is equivalent to
\begin{equation}
Trop\left(\sum_{r=1}^{n-p+1}X_{[n]\backslash\{i'_{1}\cdots\hat{i'_{r}}\cdots i'_{n-p+1}\}}Y_{J_{i'_{r}}}\right).
\end{equation}
We write the above equation more conveniently as
\begin{equation}
\label{tropinc}
Trop\left(\sum_{r=1}^{n-p+1}X_{\{[n]\backslash I'\}\cup\{i'_{r}\}}Y_{J_{i'_{r}}}\right).
\end{equation}
Taken over all possible sequences, tropicalizing and then intersecting the tropicalizations gives us a tropical analog to the classical incidence variety.\\
\\
Note that we can simplify equation (\ref{tropinc}) by using $S:=[n]\backslash I'$ and $T:=J$ and writing the above equation as
\begin{equation}
\label{tsiminc}
Trop(\sum_{i\in T\backslash S}X_{S\cup\{i\}}Y_{T\backslash\{i\}}).
\end{equation}
These equations are the tropicalization of the classical incidence relations:
\begin{equation}
\label{siminc}
\sum_{i\in T\backslash S}X_{S\cup\{i\}}Y_{T\backslash\{i\}}
\end{equation}
\end{proof}

\section{Polytopes Associated to Incidence Varieties}

We will describe the structure of $FD(p,q;n)$ using the combinatorics of the polytope $Conv(\Delta(p,n)\cup\Delta(q,n))$ where 
\[
\Delta(d,n):=Conv(\{e_{i_{1}}+e_{i_{2}}+\cdots+e_{i_{d}};\{i_{1},\ldots,i_{d}\}\in\binom{[n]}{k}\}).
\]
For simplicity, we define 
\[
\Delta(p,q;n):=Conv(\Delta(p,n),\Delta(q,n)),
\]
which we will simply write as $\Delta$, and 
\[
e_{i_{1},\ldots,i_{d}}:=e_{i_{1}}+\cdots+e_{i_{d}}.
\]
The relationship between the variety and the polytope is as follows:  A point of $FD(p,q;n)$ is a pair of tropical linear spaces $V$ and $W$ such that $V\subset W$.  Let $V$ and $W$ have tropical Plücker vectors given by $x_{I}$ and $y_{J}$ respectively (as defined in Definition \ref{tropl}).  This lets us define a function from vertices, $e_{i_{1},\ldots,i_{d}}$, of $\Delta$ to plucker vectors, $p_{i_{1},\ldots,i_{d}}$, by mapping 
\[
e_{I}\mapsto x_{I}
\]
and 
\[
e_{J}\mapsto y_{J}.
\]
The lower convex hull of the points $(e_{i_{1},\ldots,i_{d}},p_{i_{1},\ldots,i_{d}})\in \Delta\times\mathbb{R}$ project to give a subdivision of $\Delta$.  (See also Section 2 of \cite{Speyer} and Chapter 7 of \cite{GKZ}.)  We will denote by $\mathcal{D}$ the \textbf{subdivision of $\Delta$ arising from the tropical Plücker coordinates} $\{x_{I},y_{J}\}$.

\emph{Convention}: Throughout this section we will take $p+q\leq n$, since if $p+q\geq n$ then we can simply dualize the variety to get $(n-q)+(n-p)\leq n$.  Since the number of equations and terms in each are the same for the variety and its dual, it suffices to only consider the cases where $p+q\leq n$.

\subsection{Subdivisions of the polytope}

We would like to characterize the possible subdivisions $\mathcal{D}$ of $\Delta$ arising from points $\{x_{I},y_{J}\}$ that are tropical Plücker coordinates satisfying the incidence relations (\ref{tsiminc}).

\begin{defn}: For $n\in\mathbb{N}$ we define the polytope $P_{n}:=Conv(\{\pm e_{i};i\in[n]\})$.
\end{defn}

Given a function 
\[
\psi:\{\pm e_{i};i\in[n]\}\rightarrow\mathbb{R}
\]
on the vertices of $P_{n}$, define $\psi_{i}^{+}:=\psi(e_{i})$ and $\psi_{i}^{-}:=\psi(-e_{i})$ and let $w_{i}:=\psi_{i}^{+}+\psi_{i}^{-}$.  We then have the following:

\begin{propn}: Assume, without loss of generality that $w_{i}\leq w_{i+1}\,\forall i$.  Then $w_{1}<w_{2}$ if and only if the subdivision induced by $\psi$ includes the edge $e_{1},-e_{1}$.  Moreover, if $w_{1}=w_{2}$ then the subdivision introduces no new edges.
\label{min}
\end{propn}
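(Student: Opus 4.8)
The plan is to pass to the standard regular‑subdivision picture and then run a direct supporting‑hyperplane computation. First I would normalize: adding an affine function to a height function does not change the induced regular subdivision, so after subtracting the linear function $x\mapsto\sum_i\tfrac12(\psi_i^{+}-\psi_i^{-})x_i$ we may assume $\psi(e_i)=\psi(-e_i)=\tfrac12 w_i=:h_i$ for all $i$, with $h_1\le h_2\le\cdots\le h_n$. Thus $\psi$ is ``antipodally symmetric'' and $\mathcal D$ is the regular subdivision of $P_n$ it induces via the lower convex hull.

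Next I would identify which edges can possibly be new. The point configuration underlying $\mathcal D$ is the vertex set of $P_n$, so every edge of $\mathcal D$ joins two of the vertices $\pm e_i$; and any two non‑antipodal vertices of the cross‑polytope $P_n$ are already joined by an edge of $P_n$ (for $i\ne j$ the functional $x\mapsto \varepsilon x_i+\varepsilon' x_j$, $\varepsilon,\varepsilon'\in\{\pm1\}$, exposes $[\,\varepsilon e_i,\varepsilon' e_j\,]$, since $\varepsilon x_i+\varepsilon' x_j\le\sum_k|x_k|\le 1$ on $P_n$ with equality only there). Hence the only candidate new edges of $\mathcal D$ are the long diagonals $[e_i,-e_i]$. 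So it suffices to prove: with $\psi$ normalized, $[e_i,-e_i]$ is an edge of $\mathcal D$ if and only if $h_i$ is the unique strict minimum among $h_1,\dots,h_n$. Under our ordering this can happen only for $i=1$ and precisely when $h_1<h_2$, i.e.\ $w_1<w_2$; and if $w_1=w_2$ then no $h_i$ is a strict minimum, so no diagonal, hence no edge at all, is new. This yields both claims of the proposition.

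For the equivalence I would work with the lifted configuration $(v,\psi(v))\in\mathbb R^{n+1}$, whose lower hull projects onto $\mathcal D$. The diagonal $[e_i,-e_i]$ is an edge of $\mathcal D$ exactly when the lifted segment from $(e_i,h_i)$ to $(-e_i,h_i)$ is a $1$‑face of the lower hull, equivalently when there is an affine functional $\ell(x)=\langle a,x\rangle+b$ with $\ell(v)\le\psi(v)$ at every vertex $v$ and $\{\,v:\ell(v)=\psi(v)\,\}=\{e_i,-e_i\}$. Summing $\langle a,e_j\rangle+b\le h_j$ and $\langle a,-e_j\rangle+b\le h_j$ gives $b\le h_j$ for every $j$; equality at $e_i$ and $-e_i$ forces $a_i=0$ and $b=h_i$, hence $h_i=\min_j h_j$. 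If in addition $h_k=h_i$ for some $k\ne i$, the inequalities at $\pm e_k$ read $\pm a_k\le h_k-b=0$, so $a_k=0$ and both $e_k,-e_k$ lie in the contact set, a contradiction. This proves the ``only if'' direction. For ``if'', when $h_1<h_2$ take $\ell\equiv h_1$: then $\ell\le\psi$ with equality exactly on $\{e_1,-e_1\}$, so $[e_1,-e_1]$ is a lower $1$‑face, i.e.\ an edge of $\mathcal D$.

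The part I expect to carry the actual content, as opposed to bookkeeping, is the distinction exploited in the ``only if'' direction: the segment $[e_i,-e_i]$ lies \emph{inside} a face of $\mathcal D$ as soon as $h_i$ is any minimum of the $h_j$ (it then sits as an interior diagonal of the square on $\{\pm e_i,\pm e_k\}$ whenever $h_k=h_i$), whereas it is itself an \emph{edge} of $\mathcal D$ only when the minimum is strict; the contact‑set computation above is precisely what separates these two situations. The normalization step and the description of the non‑edges of $P_n$ as its antipodal pairs are routine, but they are the scaffolding the whole argument rests on, so I would state them carefully.
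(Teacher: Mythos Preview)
Your argument is correct and shares the same skeleton as the paper's: normalize $\psi$ by an affine function, then read off the face of the lower hull selected by the zero functional. The paper, however, uses a different normalization: instead of making $\psi$ antipodally symmetric, it subtracts the affine $L$ determined by $L(-e_i)=\psi(-e_i)$ for all $i$ and $L(e_1)=\psi(e_1)$, so that $\psi'(-e_i)=0$, $\psi'(e_1)=0$, and $\psi'(e_i)=w_i-w_1$. The face $\{\psi'=0\}$ then contains all the $-e_i$ together with exactly those $e_j$ having $w_j=w_1$, and $[e_1,-e_1]$ is an edge of that face iff $e_1$ is the only such $e_j$, i.e.\ iff $w_1<w_2$. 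Your symmetric normalization and direct supporting-functional computation give a tidier ``only if'' direction and make the ``moreover'' clause explicit (via the preliminary observation that only antipodal diagonals can be new edges of the cross-polytope); the paper's version leaves both of those points somewhat implicit in the single constructed face.
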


\begin{proof}:
Since the graph of $P_{n}$ under $\psi$ sits in $\mathbb{R}^{n+1}$ we can take $n+1$ vertices of the graph to form a hyperplane.  We can then define a linear function $L(x)$ on $\mathbb{R}^{n}$ such that the graph of $L(x)$ is the hyperplane.  In particular, choose those vertices to be $(-e_{i},\psi(-e_{i}))$, $\forall i\in[n]$, and $(e_{1},\psi(e_{1}))$ and $H$ the hyperplane through those points.  Choose a linear functional $L(x)$ so that $(x,L(x))\in H$, and define
\[
\psi'(x):=\psi(x)-L(x).
\]
Note that the subdivision induced by $\psi'$ is the same as that induced by $\psi$.  We have $\psi'(-e_{i})=0$, $\psi'(e_{1})=0$ and $\psi'(e_{i})\geq0$ with $\psi'(e_{i})=0$ if and only if $w_{i}=w_{1}$, which implies that $Conv(\{e_{j}\},\{-e_{i}\};\,i\in[n],\psi'(e_{j})=0)$ is a face of the subdivision of $P_{n}$ induced by $\psi'$, and therefore $\psi$.  More specifically, we have that $e_{1},-e_{1}$ is an edge of the induced subdivision if and only if $w_{1}<w_{2}$.
\end{proof}

Having described the polytopes $P_{n}$ let us relate them to the polytope $\Delta$.  Every incidence relation written in the form (\ref{tsiminc}) is determined by multi-index sets $S$ and $T$ of sizes $p-1$ and $q+1$ respectively.  To each such equation we associate the following polytope:
\[
\Delta_{S,T}:=Conv({e_{S\cup\{i\}},e_{T\backslash\{i\}};i\in T\backslash S}).
\]

\begin{propn}: $\Delta_{S,T}$ is a face of $\Delta$.
\label{face}
\end{propn}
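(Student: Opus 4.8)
The plan is to exhibit a single linear functional $\ell$ on $\mathbb{R}^{n}$ whose maximum over $\Delta$ is attained exactly at the vertices listed in the definition of $\Delta_{S,T}$. Since a face of a polytope is the convex hull of those of its vertices which it contains, the face $\{x\in\Delta:\ell(x)=\max_{\Delta}\ell\}$ will then be exactly $\Delta_{S,T}$. To set this up I would first note that $\Delta$, being the convex hull of $\{0,1\}$-vectors, satisfies $\Delta\subseteq[0,1]^{n}$, so every $\{0,1\}$-vector in $\Delta$ is a vertex of $\Delta$; hence the vertex set of $\Delta$ is $\{e_{A}:|A|=p\}\cup\{e_{A}:|A|=q\}$, and in particular each $e_{S\cup\{i\}}$ (which has $|S\cup\{i\}|=p$) and each $e_{T\backslash\{i\}}$ (which has $|T\backslash\{i\}|=q$) appearing in $\Delta_{S,T}$ is a genuine vertex of $\Delta$. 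So the whole problem reduces to finding $\ell$ with the required maximizing set.

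For $\ell$ I would take (describing it in the case $S\subseteq T$, which is the one arising from the incidence relations and which makes the two sums below have disjoint supports) a large constant $M>0$ and
\[
\ell(x)=M\sum_{j\in S}x_{j}-\sum_{j\in[n]\backslash T}x_{j},
\]
and then maximize $\ell$ over the two hypersimplex layers of $\Delta$ separately. On a vertex $e_{A}$ with $|A|=p$: since $|S|=p-1$, the term $M|A\cap S|$ attains its largest value $M(p-1)$ exactly when $S\subseteq A$, i.e.\ $A=S\cup\{i\}$ with $i\notin S$, and then the second term $-|A\cap([n]\backslash T)|=-|\{i\}\backslash T|$ is $0$ iff $i\in T\backslash S$; meanwhile any $A$ with $S\not\subseteq A$ has $\ell(e_{A})\le M(p-2)<M(p-1)$. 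So the layer-$p$ maximum of $\ell$ is $M(p-1)$, attained precisely on $\{e_{S\cup\{i\}}:i\in T\backslash S\}$. On a vertex $e_{B}$ with $|B|=q$: again $M|B\cap S|$ is maximized at $M(p-1)$ iff $S\subseteq B$, and then $-|B\cap([n]\backslash T)|\le 0$ with equality iff $B\subseteq T$; as $|B|=q=|T|-1$ this forces $B=T\backslash\{i\}$, and $S\subseteq B$ forces $i\in T\backslash S$ (such $B$ exist since $T\backslash S\neq\emptyset$). Hence the layer-$q$ maximum of $\ell$ is \emph{also} $M(p-1)$, attained precisely on $\{e_{T\backslash\{i\}}:i\in T\backslash S\}$. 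Combining the two layers, $\max_{\Delta}\ell=M(p-1)$ and the face on which it is attained is exactly $\Delta_{S,T}$.

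The one genuinely delicate step — what I expect to be the actual content — is choosing the coefficients so that the two layers $\Delta(p,n)$ and $\Delta(q,n)$ contribute the \emph{same} maximal value $M(p-1)$; this is exactly why $\ell$ must carry the weight $-1$ on the indices outside $T$, which behaves like a layer-dependent shift because $\langle(1,\dots,1),e_{A}\rangle=|A|$. After that only routine case-checking is left, together with a glance at the degenerate situations $p=1$ (where $S=\emptyset$ and the first sum disappears) and $p=q$ (where the two layers coincide and the two families $\{e_{S\cup\{i\}}\}$ and $\{e_{T\backslash\{i\}}\}$ may overlap), in which the same $\ell$ still works.
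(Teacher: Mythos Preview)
Your proof is correct and takes essentially the same approach as the paper: both exhibit the linear functional $\sum_{j\in S}x_j-\sum_{j\notin T}x_j$ and check that it is maximized over the vertices of $\Delta$ precisely on the vertices of $\Delta_{S,T}$. Your large constant $M$ is unnecessary (your own inequalities $M(p-2)<M(p-1)$ already go through with $M=1$, which is the paper's choice), and the paper packages the maximizer condition uniformly as $S\cap T\subseteq A\subseteq S\cup T$ rather than arguing layer by layer, but the content is the same.
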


\begin{proof}: Consider the linear function $L(x)=\sum_{i\in S}x_{i}-\sum_{j\notin T}x_{j}$.  This function satisfies
\[
L(e_{j})=
\begin{cases}
 1, &j\in T\cap S\\
 -1, &j\not\in T\cup S\\
 0, &else.
\end{cases}
\]
Hence we always have that $L(e_{I})\leq|T\cap S|$ for any multi-index set $I$, and in particular $L(e_{I})=|T\cap S|$ if and only if 
\begin{equation}
T\cap S\subset I\subset T\cup S.
\label{lineq}
\end{equation}
This means that for the vertices of $\Delta_{S,T}$ we have $L(e_{S\cup\{i\}})=L(e_{T\backslash\{j\}})=|T\cap S|,\;\forall i,j\in T\backslash S$.  Any other vertex of $\Delta_{S,T}$ is either of the form $e_{S'\cup\{i'\}}$ where $|S'|=p-1$ and $S'\neq S$ or $e_{T'\backslash\{j'\}}$ where $|T'|=q+1$, $T'\neq T$.  Note that if we have  $T\cap S\subset S'\cup\{i'\}\subset T\cup S$, then in fact $e_{S'\cup\{i'\}}$ is a vertex of $\Delta_{S,T}$ since we can express $S'\cup\{i'\}$ in the form $S\cup\{i\}$ for some $i\in T\backslash S$.  Similarly for $T'\backslash\{j'\}$.  Hence, if $I=S'\cup\{i'\}$ or $I=T'\backslash\{j'\}$ are multi-index sets that do not satisfy (\ref{lineq}) then $L(e_{I})<|T\cap S|$.  As only the vertices of $\Delta_{S.T}$ are the only vertices of $\Delta$ that take on the value $|T\cap S|$ under the linear functional $L$, and all other vertices of $\Delta$ take on values strictly less than $|T\cap S|$, the proposition follows.
\end{proof}

We now claim that the two types of polytopes, $P_{n}$ and $\Delta_{S,T}$ are equivalent for the purposes of what we are trying to do:

\begin{propn}: There is an affine transformation taking $\Delta_{S,T}$ to $P_{|T\backslash S|}$.
\label{trans}
\end{propn}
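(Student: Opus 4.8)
The plan is to realize $\Delta_{S,T}$ explicitly as an affine image of the cross-polytope $P_m$, where $m:=|T\backslash S|$. Write $T\backslash S=\{t_1,\dots,t_m\}$, so that the defining points of $\Delta_{S,T}$ are $a_k:=e_{S\cup\{t_k\}}=e_S+e_{t_k}$ and $b_k:=e_{T\backslash\{t_k\}}=e_T-e_{t_k}$ for $k=1,\dots,m$ (here $e_S=\sum_{s\in S}e_s$, and likewise $e_T$). The key observation is the identity $\tfrac12(a_k+b_k)=\tfrac12(e_S+e_T)$, whose right-hand side does not depend on $k$; set $c:=\tfrac12(e_S+e_T)$. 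Thus $\Delta_{S,T}=\mathrm{Conv}(\{a_k,b_k\}_{k})$ is the convex hull of the $m$ segments $[a_k,b_k]$, all of which share the midpoint $c$ — precisely the combinatorial shape of $P_m=\mathrm{Conv}(\{\pm e_k\}_k)$. Concretely, put $d_k:=a_k-c=e_{t_k}+\tfrac12(e_S-e_T)$ and define the affine map $A\colon\mathbb{R}^m\to\mathbb{R}^n$ by $A(z)=c+\sum_{k=1}^m z_k d_k$. Then $A(e_k)=a_k$ and $A(-e_k)=b_k$, and since an affine map commutes with taking convex hulls, $A(P_m)=\mathrm{Conv}(\{a_k,b_k\}_k)=\Delta_{S,T}$.

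It remains to check that $A$ is an affine isomorphism onto the affine span of $\Delta_{S,T}$, i.e. that $d_1,\dots,d_m$ are linearly independent; the affine transformation asked for is then $A^{-1}$. Restricting every vector to the coordinates indexed by $T\backslash S$ sends $e_{t_k}\mapsto e_k$, $e_S\mapsto 0$, and $e_T\mapsto\mathbf 1$ in $\mathbb{R}^m$, so $d_k$ is carried to $e_k-\tfrac12\mathbf 1$; the matrix whose rows are these vectors is $I-\tfrac12 J$ (with $J$ the all-ones matrix), and $\det(I-\tfrac12 J)=1-\tfrac m2$. This is nonzero exactly when $m\neq 2$, which is automatic in our situation: $m=|T\backslash S|=|T|-|T\cap S|\ge(q+1)-(p-1)=q-p+2\ge 3$ whenever $p<q$, and the case $p=q$ is degenerate (there $X\subseteq Y$ forces $X=Y$, and $\Delta$ collapses to a single hypersimplex) and may be set aside. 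Hence $A$ maps $\mathbb{R}^m=\mathrm{aff}(P_m)$ isomorphically onto the $m$-dimensional affine span of $\Delta_{S,T}$, which gives the proposition.

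I expect the only step needing genuine care to be the linear independence of the $d_k$ — equivalently, the assertion that $\Delta_{S,T}$ is honestly $m$-dimensional rather than lower-dimensional. This is exactly the point at which the construction would collapse (at $m=2$), so it is worth isolating the reduction to $p<q$ (hence $m\ge 3$) before running the determinant computation. Everything else — the midpoint identity, the definition of $A$, and the equality $A(P_m)=\Delta_{S,T}$ — is routine bookkeeping once the vertices are written out.
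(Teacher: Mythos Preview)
Your proof is correct, and a little cleaner than the paper's. Both arguments exploit the same central symmetry of $\Delta_{S,T}$, but they package it differently.

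The paper proceeds in two steps: first it projects out the coordinates in $S$ to reduce $\Delta_{S,T}$ to $\Delta_{\emptyset,T\setminus S}$ (so WLOG $S=\emptyset$, $T=[m]$), and then it writes down the explicit affine map $x\mapsto Ax+b$ with $A(e_i)=e_i-\tfrac{1}{m-2}\mathbf{1}$, $b=\tfrac{1}{m-2}\mathbf{1}$, checking by hand that $e_i\mapsto e_i$ and $e_{[m]\setminus\{i\}}\mapsto -e_i$. You instead do it in one stroke: the midpoint identity $\tfrac12(a_k+b_k)=\tfrac12(e_S+e_T)$ immediately exhibits $\Delta_{S,T}$ as a centrally symmetric polytope, and your map $A$ is just the obvious linear parametrization by the direction vectors $d_k$. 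The paper's two-step composition is, after unwinding, the inverse of your $A$ restricted to the affine span.

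What your approach buys is transparency about where the construction can fail: you isolate the condition $\det(I-\tfrac12 J)=1-\tfrac{m}{2}\neq 0$, i.e.\ $m\neq 2$, and justify $m=|T\setminus S|\geq q-p+2\geq 3$ under $p<q$. The paper's formula has the same singularity hidden in the denominator $\tfrac{1}{m-2}$ but does not comment on it, so on this point your write-up is actually more careful.
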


\begin{proof}: We will start by showing a transformation exists for $S=\emptyset$, then showing that any $\Delta_{S,T}$ can be transformed to $\Delta_{\emptyset,T'}$ for some $T'$.\\
Without loss of generality, let us take $T'=[n]$.  Given $i\in[n]$, the vertices $e_{i}$ and $e_{[n]\backslash \{i\}}$ of $\Delta_{\emptyset,T'}$ are opposite each other.

Consider the affine transformation
\[
x\mapsto Ax+b
\]
given by
\[
A(e_{i}):=e_{i}-\frac{1}{n-2}\sum_{j\in[n]}e_{j}
\]
and
\[
b:=\frac{1}{n-2}\sum_{j\in[n]}e_{j}.
\]
One can easily verify that this transformation satisfies 
\begin{equation}
e_{i}=A(e_{i})+b
\end{equation}
and
\begin{equation}
-e_{i}=A(e_{[n]\backslash\{i\}})+b.
\label{2}
\end{equation}
Now we only need to show that there is a affine transformation from $\Delta_{S,T}$ to $\Delta_{\emptyset,T'}$.  This transformation is given by sending $S$ to $S':=\emptyset$ and $T$ to $T':=T\backslash S$.  On vertices this sends $e_{S\cup\{i\}}$ to $e_{i}$ and $e_{T\backslash\{i\}}$ to $e_{\{T\backslash S\}\backslash\{i\}}=e_{T'\backslash\{i\}}$.  This is given by the affine transformation
\[
s\mapsto A'x+b
\]
where 
\[
A'(e_{j}):=
\begin{cases}
e_{j} & j\not\in S\\
0 & j\in S
\end{cases}
\]
and $b'=0$.  Hence, we can go from $\Delta_{S,T}$ to $P_{|T\backslash S|}$ by an affine transformation given by the composition of the above affine transformations.
\end{proof}

\begin{cor}
Let $\mathcal{D}$ be a subdivision of $\Delta_{S,T}$ induced by tropical Plücker coordinates $\{x_{I},y_{J}\}$, then $\mathcal{D}$ and $\Delta_{S,T}$ share the same 1-skeleton iff $\{x_{I},y_{J}\}$ satisfy $Trop(\sum_{i\in T\backslash S}X_{S\cup\{i\}}Y_{T\backslash\{i\}})$.
\label{intface}
\end{cor}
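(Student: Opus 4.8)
The plan is to reduce the statement to Proposition \ref{min} via the affine equivalence of Proposition \ref{trans}. First I would note that the tropical polynomial $\sum_{i\in T\backslash S} X_{S\cup\{i\}}Y_{T\backslash\{i\}}$ has exactly one monomial $X_{S\cup\{i\}}Y_{T\backslash\{i\}}$ for each $i\in T\backslash S$, and that under the affine transformation of Proposition \ref{trans} the vertex $e_{S\cup\{i\}}$ of $\Delta_{S,T}$ maps to $e_k\in P_{|T\backslash S|}$ and $e_{T\backslash\{i\}}$ maps to $-e_k$, where $k$ is the index of $i$ within $T\backslash S$. Thus, setting $\psi(e_k):=x_{S\cup\{i\}}$ and $\psi(-e_k):=y_{T\backslash\{i\}}$, the weight $w_k=\psi_k^++\psi_k^-$ defined before Proposition \ref{min} equals precisely the coefficient $x_{S\cup\{i\}}+y_{T\backslash\{i\}}$ of the monomial of the tropicalized incidence polynomial, since tropicalization turns the product into a sum.

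Next I would spell out both sides of the iff in these terms. On the one hand, the point $(x_I,y_J)$ lies in $\mathrm{Trop}\big(\sum_{i\in T\backslash S}X_{S\cup\{i\}}Y_{T\backslash\{i\}}\big)$ if and only if the minimum of the $w_k$ over $k\in[|T\backslash S|]$ is attained at least twice. On the other hand, since an affine transformation carries the regular subdivision induced by a height function to the regular subdivision induced by the transported height function (heights being unchanged up to an affine term, which does not affect the subdivision), the subdivision $\mathcal{D}$ of $\Delta_{S,T}$ induced by $\{x_I,y_J\}$ corresponds to the subdivision of $P_{|T\backslash S|}$ induced by $\psi$. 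By Proposition \ref{min}, ordering so that $w_1\le w_2\le\cdots$, the induced subdivision of $P_{|T\backslash S|}$ introduces a new edge exactly when $w_1<w_2$, i.e. exactly when the minimum of the $w_k$ is attained only once; and when $w_1=w_2$ no new edges appear. Translating back: $\mathcal{D}$ and $\Delta_{S,T}$ have the same $1$-skeleton if and only if the minimum of the $w_k$ is attained at least twice, which is exactly the tropical incidence condition.

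The main obstacle I anticipate is making the correspondence between subdivisions precise under the affine map — specifically, checking that Proposition \ref{trans} is an honest affine isomorphism onto $P_{|T\backslash S|}$ (not merely a surjection collapsing coordinates), so that faces, edges, and regular subdivisions transport bijectively, and that the new vertices introduced in $\mathcal{D}$ (those lying over $\Delta_{S,T}$ and not at its vertices) correspond correctly. One should also confirm that $\Delta_{S,T}$ being a face of $\Delta$ (Proposition \ref{face}) guarantees that the subdivision $\mathcal{D}$ of the full polytope $\Delta$ restricts to a subdivision of $\Delta_{S,T}$ induced by the same heights, so that the statement ``$\mathcal{D}$ restricted to $\Delta_{S,T}$'' is well-defined; this is what lets us quote Proposition \ref{min} on the single polytope $P_{|T\backslash S|}$. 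Once these compatibilities are in hand, the corollary is immediate from Proposition \ref{min}.
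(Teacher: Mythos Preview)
Your proposal is correct and follows essentially the same route as the paper: transport $\Delta_{S,T}$ to $P_{|T\backslash S|}$ via Proposition~\ref{trans}, identify the weights $w_k$ with the tropical monomials $x_{S\cup\{i\}}+y_{T\backslash\{i\}}$, and invoke Proposition~\ref{min} to equate ``no new edges'' with ``minimum attained at least twice.'' Your added caveats about the affine map being a genuine isomorphism on the affine hull and about restricting $\mathcal{D}$ to a face are legitimate points the paper leaves implicit, but they do not change the argument.
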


\begin{proof}
By the previous Proposition \ref{trans} we can convert the polytope $\Delta_{S,T}$ into $P_{|T\backslash S|}$, which means the subdivision, $\mathcal{D}$ on $\Delta_{S,T}$ becomes a subdivision $\mathcal{D'}$ on $P_{|T\backslash S|}$.  In particular, vertices $e_{S\cup\{i\}}$ and $e_{T\backslash\{i\}}$ in $\Delta_{S,T}$ become vertices $e_{j}$ and $-e_{j}$ in $P_{|T\backslash S|}$ for some $j$.  According to Proposition \ref{min}, the only way to introduce a new edge is for the weights $w_{i}$ of the subdivision to achieve the minimum exactly once.  The weights are given by the values on opposite vertices $e_{j}$ and $-e_{j}$, which in this case are given by $x_{S\cup\{i\}}$ and $y_{T\backslash\{i\}}$.  In particular, the weights are $w_{j}=x_{S\cup\{i\}}+y_{T\backslash\{i\}}$, a term of $Trop(\sum_{i\in T\backslash S}X_{S\cup\{i\}}Y_{T\backslash\{i\}})$.  By Proposition \ref{min}, the polytope has no new edges iff the $w_{i}$ achieve their minimum at least twice, which occurs iff $\{x_{I},y_{J}\}$ satisfy $Trop(\sum_{i\in T\backslash S}X_{S\cup\{i\}}Y_{T\backslash\{i\}})$.
\end{proof}

We are finally now in position to prove the following:

\begin{lem}
\label{edgeterm}
Given $I$ and $J$ multi-indices with $|I|=p$ and $|J|=q$, the following are equivalent:\\
(1) $e_{I}e_{J}$ is an edge of $\Delta$\\
(2) the term $X_{I}Y_{J}$ appears in no tropical incidence relation\\
(3) $I\subset J$
\end{lem}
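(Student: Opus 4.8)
The plan is to prove the equivalences by establishing the cycle $(3)\Rightarrow(1)\Rightarrow(2)\Rightarrow(3)$, using the combinatorial description of $\Delta_{S,T}$ developed in Propositions \ref{face}--\ref{trans} together with Corollary \ref{intface}. The key observation tying everything together is that a term $X_IY_J$ with $|I|=p$, $|J|=q$ appears in the incidence relation indexed by $(S,T)$ exactly when $I=S\cup\{i\}$ and $J=T\setminus\{i\}$ for some $i\in T\setminus S$; equivalently, when $S=I\cap J$ and $T=I\cup J$ and $|I\cup J|-|I\cap J|=q+1-(p-1)=q-p+2$, i.e.\ $|I\setminus J|=1$ and $|J\setminus I|=q-p+1$. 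So the set of terms of all incidence relations is precisely $\{X_IY_J : |I\setminus J|=1\}$, while $I\subset J$ is the statement $|I\setminus J|=0$.

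First I would prove $(3)\Rightarrow(2)$: if $I\subset J$, then for no choice of $(S,T)$ can $X_IY_J$ occur, since occurrence forces $|I\setminus J|=1$, contradicting $I\setminus J=\emptyset$. Next, $(2)\Rightarrow(1)$ — or rather I would prove the contrapositive of $(1)\Rightarrow(3)$ fused with $(2)$: suppose $X_IY_J$ appears in no incidence relation; I claim $e_Ie_J$ is an edge of $\Delta$, and in fact $I\subset J$. Here the natural route is: if $I\not\subset J$, pick $i\in I\setminus J$. I want to produce a genuine incidence relation $(S,T)$ containing $X_IY_J$, which needs $|I\setminus J|=1$; if $|I\setminus J|\geq 2$ this particular term never appears, so I separately need to show $e_Ie_J$ is \emph{not} an edge of $\Delta$ when $|I\setminus J|\geq 2$ — that is the substantive polytope computation. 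The cleanest way is to show directly that when $2\leq|I\setminus J|$ (equivalently $I\not\subset J$ and $J\not\subset I$ appropriately, keeping $|I|=p<q=|J|$ so $J\not\subset I$ automatically unless $p=q$), the midpoint $\frac12(e_I+e_J)$ lies in the relative interior of a higher-dimensional face, by exhibiting two other vertices $e_{I'},e_{J'}$ of $\Delta$ with $e_I+e_J=e_{I'}+e_{J'}$: take $i\in I\setminus J$, $i'\in J\setminus I$, and set $I'=(I\setminus\{i\})\cup\{i'\}$ and $J'=(J\setminus\{i'\})\cup\{i\}$; one checks $|I'|=p$, $|J'|=q$, and $e_{I'}+e_{J'}=e_I+e_J$, and that $(I',J')\neq(I,J)$, witnessing that $e_Ie_J$ is not an edge. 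Conversely, when $|I\setminus J|=1$ there is exactly one incidence relation $(S,T)=(I\cap J,\,I\cup J)$ whose term set contains $X_IY_J$, so "$X_IY_J$ in no incidence relation" forces $|I\setminus J|\neq 1$, hence (combined with the above) $I\subset J$.

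Finally I would close the loop with $(3)\Rightarrow(1)$: if $I\subset J$, I must show $e_Ie_J$ genuinely is an edge of $\Delta$. This is where I would invoke Proposition \ref{face} and Corollary \ref{intface}: the pair $S=I$... wait, sizes don't match, so instead I argue directly with a separating linear functional. Take $L(x)=\sum_{k\in I}x_k-\sum_{k\notin J}x_k$; then $L(e_I)=|I|=p$ and $L(e_J)=|I|=p$ as well since $I\subset J$, while for any other vertex $e_K$ of $\Delta$ (so $|K|\in\{p,q\}$) one gets $L(e_K)=|K\cap I|-|K\setminus J|\leq |I|$ with equality forcing $I\subseteq K\subseteq J$; if $|K|=p$ this gives $K=I$, and if $|K|=q$ this gives $K=J$. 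Hence $\{e_I,e_J\}$ is exactly the vertex set of a face, i.e.\ an edge. The main obstacle I anticipate is the non-edge direction when $|I\setminus J|\geq 2$: one has to be a little careful that the swap producing $(I',J')$ really yields distinct vertices both of which lie in $\Delta$ (the constraint being only that $|I'|=p$ and $|J'|=q$, which the swap preserves by construction), and that this affine dependence genuinely certifies $e_Ie_J$ is not an edge; this is routine but is the one place a sign or cardinality slip could creep in. Everything else is bookkeeping on which $(S,T)$ index a given term, which the discussion preceding the lemma has essentially set up.
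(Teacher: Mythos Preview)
Your ``key observation'' is incorrect, and this breaks the logical structure of the argument. You assert that $X_IY_J$ appears in the incidence relation indexed by $(S,T)$ only when $S=I\cap J$ and $T=I\cup J$, which forces $|I\setminus J|=1$. But the incidence relations carry no requirement that $S\subset T$. For \emph{any} $i\in I\setminus J$, setting $S=I\setminus\{i\}$ and $T=J\cup\{i\}$ gives $|S|=p-1$, $|T|=q+1$, $i\in T\setminus S$, and $S\cup\{i\}=I$, $T\setminus\{i\}=J$; so $X_IY_J$ occurs in that relation. Thus $X_IY_J$ appears in some incidence relation if and only if $I\not\subset J$, with no separate ``$|I\setminus J|\geq 2$'' case. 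This is exactly the paper's one-line proof of $(2)\Rightarrow(3)$.

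Because of this error your implications do not close. You correctly prove $(3)\Rightarrow(1)$ via the linear functional $L(x)=\sum_{k\in I}x_k-\sum_{k\notin J}x_k$ (identical to the paper) and $(3)\Rightarrow(2)$; and you show that $|I\setminus J|=1$ forces $\neg(2)$ while $|I\setminus J|\geq 2$ forces $\neg(1)$. But these four facts do not combine to give either $(1)\Rightarrow(3)$ or $(2)\Rightarrow(3)$: under your (false) belief that $|I\setminus J|\geq 2$ is compatible with $(2)$, you cannot deduce $I\subset J$ from $(2)$, and you never treat the case $|I\setminus J|=1$ for the implication $(1)\Rightarrow(3)$. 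Once the observation is corrected, $(2)\Leftrightarrow(3)$ is immediate. Separately, your midpoint/affine-dependence argument (swap $i\in I\setminus J$ with $i'\in J\setminus I$ to get $e_I+e_J=e_{I'}+e_{J'}$) is a perfectly valid and more elementary alternative to the paper's route through Propositions~\ref{face} and~\ref{trans} for showing $e_Ie_J$ is not an edge; but it already works whenever $|I\setminus J|\geq 1$ (since $p<q$ guarantees $|J\setminus I|\geq 1$), so restricting it to $|I\setminus J|\geq 2$ is both unnecessary and the source of the gap in your $(1)\Rightarrow(3)$.
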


\begin{proof}:\\
$((1)\Longrightarrow(2))$: We show the contrapositive: Suppose $X_{I}Y_{J}$ is a term in some incidence relation, call it $R_{S,T}$, then $I=S\cup\{i\}$ and $J=T\backslash\{i\}$.  By Proposition \ref{trans}, the vertices $e_{I},e_{J}$ are opposite each other, since they appear as $e_{r},-e_{r}$ in $P_{|T\backslash S|}$.  Since $e_{I}$ and $e_{J}$ are opposite each other $e_{I}e_{J}$ cannot be an edge of $\Delta_{S,T}$.  By Proposition \ref{face}, we know that $\Delta_{S,T}$ is a face of $\Delta$, hence $e_{I}e_{J}$ cannot be an edge of $\Delta$.\\
$((2)\Longrightarrow(3))$: Assume $I\not\subset J$ and let $r\in I\backslash J$.  Take $S=I\backslash\{r\}$, $T=J\cup\{r\}$.  Then $X_{I}Y_{J}$ is a term of the relation coming from S and T.\\
$((3)\Longrightarrow(1))$: Define the linear form $L(x)=\sum_{i\in I}x_{i}-\sum_{j\notin J}x_{j}$, then $L(x)$ gives a hyperplane in $\mathbb{R}^{n}$.  Note that $L(e_{S})\leq p$ for any multi-index set $S$.  In particular, $L(e_{S})=p$ only if $I\subset S\subset J$.  Hence $e_{I}e_{J}$ lies on the plane $L(x)=p$ while all other vertices of $\Delta$ are in the half-space $L(x)< p$. 
\end{proof}

\begin{thm}
Let $\{x_{I}\}$ with $(|I| = p)$, $\{y_{J}\}$ with $(|J| = q)$ be given.  The following are equivalent:\\
(1) The collection $\{x_{I}, y_{J}\}$ satisfies the tropical Plücker relations and tropical incidence relations.\\
(2) The subdivision $\mathcal{D}$ induced by the collection $\{x_{I}, y_{J}\}$ and $\Delta$ share the same 1-skeleton.
\label{main}
\end{thm}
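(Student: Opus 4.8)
The plan is to reduce the statement to Speyer's Proposition~\ref{Sp} and to Corollary~\ref{intface} by cutting $\Delta$ along the relevant faces. Since $p+q\le n$ we may assume $p<q$ (if $p=q$ then $X\subset Y$ forces $X=Y$, $\Delta=\Delta(p,n)$, and the claim is exactly Proposition~\ref{Sp}). The linear functional $x\mapsto\sum_i x_i$ is minimized on $\Delta$ exactly along $\Delta(p,n)$ and maximized exactly along $\Delta(q,n)$, so both are faces of $\Delta$; together with the faces $\Delta_{S,T}$ of Proposition~\ref{face} these are the pieces I work with. Two standard facts about regular subdivisions will be used throughout: for a face $F$ of $\Delta$, the faces of $\mathcal D$ contained in $F$ are exactly the faces of the subdivision $\mathcal D|_F$ of $F$ induced by the restricted heights; and a subdivision of a polytope retains every vertex and every edge of that polytope. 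In particular $\mathcal D|_{\Delta(p,n)}$ is Speyer's subdivision of $\Delta(p,n)$ attached to $\{x_I\}$, $\mathcal D|_{\Delta(q,n)}$ is the one attached to $\{y_J\}$, and $\mathcal D|_{\Delta_{S,T}}$ is the subdivision appearing in Corollary~\ref{intface}.

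The second ingredient is a classification of the ``new'' edges, i.e.\ the edges of $\mathcal D$ that are not edges of $\Delta$. An edge of $\mathcal D$ joins two vertices of $\Delta$, which lie at level $p$ or $q$. If both lie at level $p$ the segment lies inside the face $\Delta(p,n)$ and is a hypersimplex edge or a new edge there; similarly for level $q$. If one lies at each level, with endpoints $e_I, e_J$ ($|I|=p$, $|J|=q$), then by Lemma~\ref{edgeterm} it is an edge of $\Delta$ iff $I\subset J$; if $I\not\subset J$, picking $r\in I\setminus J$ and setting $S=I\setminus\{r\}$, $T=J\cup\{r\}$ (so $|S|=p-1$, $|T|=q+1$) exhibits $e_I$ and $e_J$ as the antipodal pair of vertices of $\Delta_{S,T}$ in the cross-polytope model of Proposition~\ref{trans}, so the segment lies inside $\Delta_{S,T}$. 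Conversely, in that cross-polytope model the only non-edges of $\Delta_{S,T}$ are these antipodal pairs, so the new edges that $\Delta_{S,T}$ can detect are exactly of this mixed-level type. Hence every potential new edge of $\mathcal D$ lies in one of the faces $\Delta(p,n)$, $\Delta(q,n)$, $\Delta_{S,T}$, and every vertex of $\Delta$ lies on $\Delta(p,n)$ or $\Delta(q,n)$.

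Given these, the two implications are short. For $(1)\Rightarrow(2)$: Proposition~\ref{Sp} applied to $\{x_I\}$ and to $\{y_J\}$ says $\mathcal D|_{\Delta(p,n)}$ and $\mathcal D|_{\Delta(q,n)}$ share their $1$-skeleta with $\Delta(p,n)$ and $\Delta(q,n)$, so in particular every $e_I$ and $e_J$ is a vertex of $\mathcal D$ and no new edge lies in these two faces; and Corollary~\ref{intface} applied to each admissible pair $S,T$ says $\mathcal D|_{\Delta_{S,T}}$ shares its $1$-skeleton with $\Delta_{S,T}$, so no new edge lies in any $\Delta_{S,T}$. By the classification above, $\mathcal D$ and $\Delta$ then have the same $1$-skeleton. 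For $(2)\Rightarrow(1)$: if $\mathcal D$ and $\Delta$ share a $1$-skeleton, then for any face $F$ in our list every edge of $\mathcal D|_F$ is an edge of $\mathcal D$ lying in $F$, hence an edge of $\Delta$ lying in $F$, hence an edge of $F$; since $\mathcal D|_F$ retains all edges and vertices of $F$, it shares its $1$-skeleton with $F$. Taking $F=\Delta(p,n)$ and $F=\Delta(q,n)$ and invoking Proposition~\ref{Sp} gives the tropical Pl\"ucker relations for $\{x_I\}$ and $\{y_J\}$; taking $F=\Delta_{S,T}$ for every $S,T$ with $|S|=p-1$, $|T|=q+1$ and invoking Corollary~\ref{intface} gives all the tropical incidence relations of Theorem~\ref{tropincl}. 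Hence (1) holds.

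The real content is the bookkeeping in the first two paragraphs; the one point that needs care is the compatibility of regular subdivisions with passing to a face, namely that the lower hull of the lifted vertex set, restricted to a face $F$ of $\Delta$, coincides with the lower hull of the lifted vertices lying on $F$ — this is what makes $\mathcal D|_{\Delta(p,n)}$, $\mathcal D|_{\Delta(q,n)}$, and $\mathcal D|_{\Delta_{S,T}}$ precisely the objects controlled by Proposition~\ref{Sp} and Corollary~\ref{intface}. I expect this to be the only genuine hurdle; once it is in hand, the proof is just the assembly of Lemma~\ref{edgeterm}, Proposition~\ref{Sp}, and Corollary~\ref{intface}.
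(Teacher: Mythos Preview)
Your argument is correct and follows the same strategy as the paper's proof: reduce to Lemma~\ref{edgeterm} and Corollary~\ref{intface} by locating every potential new edge inside some face $\Delta_{S,T}$ (or inside $\Delta(p,n)$, $\Delta(q,n)$), and then read off the relations via the restricted subdivisions. Your write-up is in fact more complete than the paper's own proof, which treats only the mixed-level edge in the $(1)\Rightarrow(2)$ direction and leaves the same-level edges, the vertex retention, and the converse $(2)\Rightarrow(1)$ implicit; your explicit invocation of Proposition~\ref{Sp} on the two hypersimplex faces and your face-restriction bookkeeping fill exactly those gaps.
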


\begin{proof}
Let us suppose that statement (1) holds, so that $\mathcal{D}$ is induced by points satisfying the Plücker relations and tropical incidence relations.  Now assume that $\mathcal{D}$ induces an internal edge $e_{I}e_{J}$ that is not an edge of $\Delta$.  Statements (1) and (3) of Lemma \ref{edgeterm} tell us that $e_{I}e_{J}$ is an edge of $\Delta$ iff $I\subset J$, so in this case we must have $I\not\subset J$.  Now by statement (2) of Lemma \ref{edgeterm} the term $X_{I}Y_{J}$ then appears in a tropical incidence relation $R_{S,T}$, which implies that $e_{I}e_{J}$ is an edge of the polytope $\Delta_{S,T}$ corresponding to $R_{S,T}$.  However, this means that $e_{I}e_{J}$ is an internal edge of $\mathcal{D}$ of the polytope $\Delta_{S,T}$, which contradicts Corollary \ref{intface}.  Hence, $\mathcal{D}$ is induced by points satisfying the tropical Plücker relations and tropical incidence relations if and only if $\mathcal{D}$ does not introduce new edges.
\end{proof}

\section{A Relationship with Matroids}

We have so far been able to show a relationship between tropical Plücker coordinates of tropical incidence varieties and certain polytopes.  We would like to be able to make a connection to matroids.  In particular, it would be nice if we could generalize Proposition 2.2 of \cite{Speyer}.  The statement is
\begin{propn} (Proposition 2.2 of \cite{Speyer}) The following are equivalent:\\
(1). $p_{i_{1},\ldots,i_{d}}$ are tropical Pl\"{u}cker coordinates\\
(2). The one skeleta of $\mathcal{D}$ and $\Delta(d,n)$ are the same.\\
(3). Every face of $\mathcal{D}$ is matroidal.
\end{propn}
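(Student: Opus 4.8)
The plan is to prove the two equivalences $(2)\Leftrightarrow(3)$ and $(1)\Leftrightarrow(2)$; of the four implications involved only $(1)\Rightarrow(2)$ requires genuine work. First I record one observation used throughout. If $e_Ie_J$ is an edge of $\Delta(d,n)$, so $|I\triangle J|=2$, then the segment $e_Ie_J$ contains no lattice point besides its two endpoints, so $\mathcal D$ restricts to it as the trivial subdivision and $e_Ie_J$ is an edge of $\mathcal D$. Hence the $1$-skeleton of $\Delta(d,n)$ is always contained in that of $\mathcal D$, and statement (2) is equivalent to the assertion that $\mathcal D$ has no edge $e_Ie_J$ with $|I\triangle J|\ge 4$.

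For $(2)\Leftrightarrow(3)$ I would use the criterion of Gelfand, Goresky, MacPherson, and Serganova: a polytope whose vertices are vertices of $\Delta(d,n)$ is the base polytope of a matroid if and only if each of its edges is parallel to $e_a-e_b$ for some $a\neq b$. Every edge of $\Delta(d,n)$ is of this form, so if $\mathcal D$ has no new edges then every cell of $\mathcal D$ has all edges in root directions and is therefore matroidal, giving $(2)\Rightarrow(3)$; conversely, if every face of $\mathcal D$ is matroidal then every edge of $\mathcal D$ is a root direction, hence an edge of $\Delta(d,n)$, and combined with the inclusion above this forces the $1$-skeleta to coincide, giving $(3)\Rightarrow(2)$.

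For $(2)\Rightarrow(1)$ I would argue on octahedral faces. Given $S\in\binom{[n]}{d-2}$ and distinct $i,j,k,l\notin S$, the face $O=\{x\in\Delta(d,n):x_m=1\ (m\in S),\ x_m=0\ (m\notin S\cup\{i,j,k,l\})\}$ is a $3$-dimensional octahedron whose three diameters are $e_{Sij}e_{Skl}$, $e_{Sik}e_{Sjl}$, $e_{Sil}e_{Sjk}$, and $\mathcal D$ restricts to a regular subdivision of $O$. Normalizing the heights $p$ on the six vertices of $O$ modulo an affine function, one checks directly that the induced subdivision is: the trivial one when $p_{Sij}+p_{Skl}=p_{Sik}+p_{Sjl}=p_{Sil}+p_{Sjk}$; a cut by an equatorial square, introducing no new edge, when the minimum of these three numbers is attained exactly twice; and a cut by the corresponding diameter when the minimum is attained exactly once. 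Since (2) forbids the last case for every choice of $S,i,j,k,l$, the minimum is always attained at least twice, which is precisely the list of tropical Pl\"ucker relations, so (1) holds.

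The implication $(1)\Rightarrow(2)$ is where I expect the main difficulty, because (1) and the octahedral analysis above only constrain the $3$-faces of $\mathcal D$, whereas (2) is a statement about edges lying in arbitrarily high-dimensional faces; one must propagate the local three-term conditions globally. I see two routes. The structural route notes that (1) says exactly that $(p_K)$ is a valuated matroid whose underlying matroid is the uniform matroid $U_{d,n}$, and then invokes the theorem of Dress and Wenzel, in the form used by Speyer and Sturmfels in \cite{SS} and by Kapranov, that the regular subdivision of $\Delta(d,n)$ induced by a valuated matroid has all faces matroidal --- which is (3), whence (2) by the previous paragraph. The self-contained route is an induction on $m=\tfrac12|I\triangle J|$ proving that $\mathcal D$ has no edge $e_Ie_J$ with $m\ge 2$: for $m=2$ such an edge is a diameter of the octahedron $O$ with $S=I\cap J$, which by the analysis above contradicts the relevant tropical Pl\"ucker relation; for $m\ge 3$ one passes to the minimal face $F\cong\Delta(m,2m)$ of $\Delta(d,n)$ containing $e_I$ and $e_J$ as antipodal vertices, observes that $e_Ie_J$ is then an edge of $\mathcal D|_F$ through the barycenter of $F$, and uses this to exhibit, inside a proper face of $F$, an edge of $\mathcal D$ with parameter $m-1$, contradicting the inductive hypothesis. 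Making this descent precise --- pinning down the correct smaller face and controlling the restricted subdivision there --- is the delicate point; if it proves stubborn, the valuated-matroid machinery supplies $(1)\Rightarrow(3)$ outright and closes the argument.
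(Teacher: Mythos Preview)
The paper does not contain a proof of this proposition: it is quoted verbatim as Proposition~2.2 of \cite{Speyer}, once in the introduction and once at the start of Section~4, in both cases without argument. It functions here purely as a cited result, the template that Theorem~\ref{main} and the concluding Question are meant to generalize. There is therefore no ``paper's own proof'' to compare your proposal against.

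That said, your sketch is sound and tracks the argument in Speyer's original paper. The equivalence $(2)\Leftrightarrow(3)$ via the Gelfand--Goresky--MacPherson--Serganova edge criterion is exactly right, and your octahedral analysis for $(2)\Rightarrow(1)$ is the standard one. You are also correct that $(1)\Rightarrow(2)$ is the substantive direction; your first route (three-term relations $\Rightarrow$ valuated matroid $\Rightarrow$ matroidal subdivision) is the one Speyer uses, going through Dress and Wenzel. Your second, self-contained inductive route is plausible in outline but, as you note, the descent step---producing from a long diagonal in $\Delta(m,2m)$ a shorter bad edge in a proper face---is not made precise, and this is genuinely the delicate point: a long edge through the barycenter does not obviously project to an edge of the restricted subdivision on a facet. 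If you want a self-contained proof you will need to pin that down; otherwise the valuated-matroid route suffices.
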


\begin{defn}
Let $P$ be a polytope that is a subpolytope of $\Delta(d,n)$ for some $d$ and $n$.  $P$ is \textbf{matroidal} if its vertices, considered as subsets of ${[n]\choose d}$ are the bases of some matroid.
\end{defn}

Theorem \ref{main} gives us an analogous statement for (1) and (2) in the case of tropical incidence varieties.  Ideally, we would like to find a third condition using matroids that is equivalent to the statements of Theorem \ref{main}.  In particular, since $\Delta=Conv(\Delta(p,n),\Delta(q,n))$ we would like to consider polytopes of the form $Conv(F_{p},F_{q})$ where $F_{p}$ and $F_{q}$ are matroidal subpolytopes of $\Delta(p,n)$ and $\Delta(q,n)$ respectively.  What we then need is a condition \emph{between} two matroids.  Let $T_{p}$ and $T_{q}$ be the matroids corresponding to $F_{p}$ and $F_{q}$ respectively.  Since the polytope $\Delta$ arises in connection with conditions for when two tropical linear spaces are contained in each other, we would like $Conv(F_{p},F_{q})$ to do the same.  In particular, we want the bases of the matroids $T_{p}$ and $T_{q}$ to correspond to bases for linear spaces $V$ and $W$ where $W\subset V$, so a natural choice to consider is that of the matroid quotient:

There are many equivalent (cryptomorphic) ways to define a matroid quotient, a common one being that given matroids $M$ and $M'$ on the same underlying set $E$, we have $M'$ is a quotient of $M$ if the identity map on $E$ is a matroid strong map.  Since matroidal polytopes are defined using matroid bases, we need to define the matroid quotient in terms of bases as well.  The following is from Proposition 7.4.7 of \cite{Matroid}.  Given a matroid $M$, denote the \textbf{set of bases of $M$} by $\B(M)$.

\begin{defn} (Proposition 7.4.7, 1 in \cite{Matroid})
\label{quot}
$M'(E)$ is a \textbf{quotient} of $M(E)$, or $M'$ and $M$ are \textbf{concordant}, if and only if $\forall B\in\B(M)$ and $i\not\in B$, then $\exists B'\in\B(M')$ with $B'\subset B$ such that 
\begin{equation}
\{j;\;B\cup{i}\sub{j}\in\B(M)\}\supset\{j;\;B'\cup{i}\sub{j}\in\B(M')\}.
\label{conc}
\end{equation}
In other words, we have the following containment diagram:\\
\\
$\begin{CD}
B @>\cup{i}\sub{j}>> B\cup{i}\sub{j}\\
\bigcup @| \bigcup @|\\
B' @>\cup{i}\sub{j}>> B'\cup{i}\sub{j}
\end{CD}$\\
\\
\end{defn}
Note the following for vector matroids:

\begin{rem} (Proposition 7.4.8, 2 in \cite{Matroid})\\
Vector matroid quotients correspond to taking a quotient space: $M$ is represented by a subset $V'$ of vectors in a vector space $V$, and $M'$ is represented by vectors $\{v+W;\;v\in V'\}$ in the quotient space $V/W$ for some subspace $W$.  In particular, every vectorial nullity-$k$ quotient $M\mapsto M'$ can be realized by finding a matrix representation for $M$ that becomes a matrix representation for $M'$ on deletion of its first $k$ rows.
\end{rem}

This proposition suggests that matroid quotient is the relation we are looking for, as at least for vector matroids taking a quotient does in fact correspond to taking a quotient of vector spaces. Going on this expectation, we define the polytope that we would like to study:

\begin{defn}
Given matroidal polytopes $F$ and $G$ with corresponding matroids $T_{F}$ and $T_{G}$, we say that $Conv(F,G)$ is a \textbf{concordant polytope} if the matroids $T_{F}$ and $T_{G}$ are concordant.
\end{defn}

We can now make a third statement with matroids that we would like to be equivalent to the first two from Theorem \ref{main} giving:\\
\\
\textbf{Question}: Let $\mathcal{D}$ be a subdivision of $\Delta(p,q;n)$ arising from a point on $FD(p,q;n)$, then is the convex hull of every cell of $\mathcal{D}$ a concordant polytope, and does the converse hold?\\
\\
This question is supported in the affirmative by the following:\\
\\
\textbf{Observation}: If $V\subset W\subset K^{n}$ ($K$ the field of Puiseaux series in the variable $t$) are vector spaces (dim $V$=$p$ and dim $W$=$q$) and $X_{I}$ and $Y_{J}$ are tropical Pl\"{u}cker coordinates of $Trop(V)$ and $Trop(W)$ respectively, then every cell of the subdivision $\mathcal{D}$ induced by $X_{I}, Y_{J}$ is a concordant polytope.

\begin{proof}
Let $Q$ be a cell of the subdivision $\mathcal{D}$, then there exists a linear functional function $L$ satisfying for multi-indices $I$, $|I|=p$
\begin{align*}
L(e_{I})=X_{I},\: for\;e_{I}\in Q\\
L(e_{I})>X_{I},\: for\;e_{I}\notin Q.
\end{align*}
Similarly for multi-indices $J$, $|J|=q$ we have
\begin{align*}
L(e_{J})=Y_{J},\: for\;e_{J}\in Q\\
L(e_{J})>Y_{J},\: for\;e_{J}\notin Q.
\end{align*}
Now, letting $a_{R}:=L(e_{R})$ where $R$ is a multi-index of size $p$ or $q$ define
\begin{align*}
T:&K^{n}\rightarrow K^{n}\\
&e_{R}\mapsto t^{-a_{R}}e_{R}.
\end{align*}
This lets us replace $V$ and $W$ with the vector spaces $T(V)$ and $T(W)$, so that the $I$th tropical Pl\"{u}cker coordinate of $T(V)$ is now $X_{I}-L(e_{I})$, which is $0$ for $e_{I}\in Q$ and positive for $e_{I}\notin Q$ and similarly for coordinates of $W$.  Now let $\overline{V}$ and $\overline{W}$ be the reductions modulo $t$ of $T(V)$ and $T(W)$ respectively.  This translates to the $I$th Plücker coordinate of $\overline{V}$ being \textit{nonzero} if and only if $e_{I}\in Q$, and similarly the $J$th Plücker coordinate of $\overline{W}$ being nonzero is equivalent to $e_{J}\in Q$.\\
\\
Now note that the $I$th Plücker coordinate of $\overline{V}$ is nonzero is equivalent to $\{e_{k};\;k\in[n]\backslash I\}$ being a basis for $K^{n}/\overline{V}$, and similarly for the $J$th Plücker coordinate of $\overline{W}$ and $\{e_{k};\;k\in[n]\backslash J\}$.  Hence the matroid attached to $K^{n}/\overline{V}$ has bases $\{[n]\backslash I;\;e_{I}\in Q,\;|I|=p\}$, and similarly the matroid attached to $K^{n}/\overline{W}$ has bases $\{[n]\backslash J;\;e_{J}\in Q,\;|J|=q\}$.  We have $\overline{V}\subset\overline{W}\subset K^{n}$ so the matroid with bases $\{[n]\backslash J;\;e_{J}\in Q,\;|J|=q\}$ is a quotient of the matroid with bases $\{[n]\backslash I;\;e_{I}\in Q,\;|I|=p\}$.  The result follows by taking complements of the bases in $[n]$.
\end{proof}

Recall that the characterization of tropical linear spaces in terms of matroids is simply that every face of the induced subdivision of the hypersimplex is matroidal. This equivalence is obtained by looking at the edges that have to arise in the subdivision, and observing that the vertices of such an edge are related according to the exchange property of bases in matroids. However, we cannot prove the analogous statement, as posed in the question, as directly as in the case of matroids:  The following example shows that you can have cells of subdivisions that sit in the 1-skeleton of $\Delta$, but are \textit{not} concordant.  In particular it is possible for the polytope $Conv(F_{p},F_{q})$ to sit in the 1-skeleton of $\Delta$ while $T_{p}$ and $T_{q}$ are not concordant:

\begin{ex}
Let $F_{2}$ and $F_{3}$ be faces of $\Delta=\Delta(2,3;4)$ with vertices $\{e_{12},e_{13},e_{24},e_{34}\}$ and $\{e_{123},e_{124},e_{134}\}$ respectively.  Then the polytope $Conv(F_{2},F_{3})$ sits in the 1-skeleton of $\Delta$, but the matroids $T_{2}$ and $T_{3}$ with bases $\{12,13,24,34\}$ and $\{123,124,134\}$ respectively are not concordant:

\begin{figure}[h]

\tdplotsetmaincoords{60}{120}

\begin{tikzpicture}[>=stealth,tdplot_main_coords,scale=4]
  \coordinate[label=below right:{$\color{blue}12$}] (12) at (-0.707,0.707,0);
  \coordinate[label=below right:{$\color{blue}13$}] (13) at (0.707,0.707,0);
  \coordinate[label=above left:{$\color{blue}24$}] (24) at (-0.707,-0.707,0);
  \coordinate[label=above left:{$\color{blue}34$}] (34) at (0.707,-0.707,0);
  \coordinate[label=above right:{$\color{red}123$}] (123) at (0,1.225,0.707);
  \coordinate[label=above left:{$\color{red}124$}] (124) at (-0.707,0,0.707);
  \coordinate[label=below right:{$\color{red}134$}] (134) at (0.707,0,0.707);

  \draw[blue] (12) -- (13) -- (34) -- (24);
  \draw[blue,dashed] (24) -- (12);
  \draw[red] (123) -- (124) -- (134) -- cycle;
  \draw (123) -- (12);
  \draw (123) -- (13);
  \draw (134) -- (13);
  \draw (134) -- (34);
  \draw (124) -- (24);
  \draw[dashed] (124) -- (12);
\end{tikzpicture}

\caption{The polytope $Conv(F_{2},F_{3})$ with the vertices labeled with the corresponding bases.  The face $F_{2}$ is shown in blue, and $F_{3}$ in red.}

\end{figure}
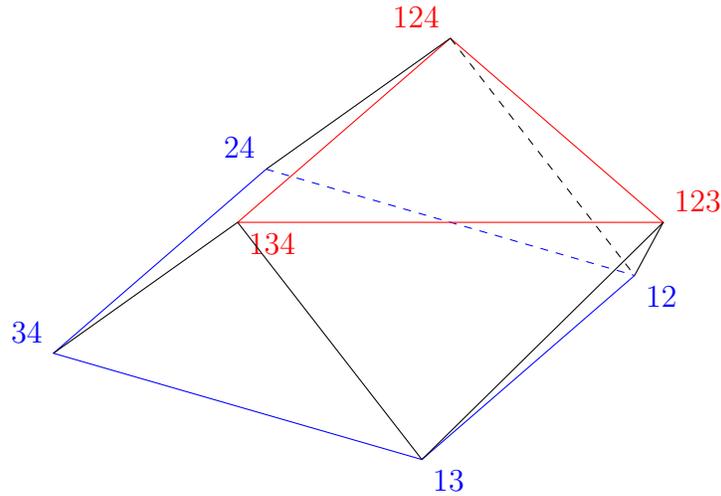

Note, however, that $Conv(F_{2},F_{3})$ cannot arise from a subdivision $\mathcal{D}$ of $\Delta(2,3;4)$ coming from a point on $FD(2,3;4)$: Any subdivision of $\Delta(2,3;4)$ induced by tropical Pl\"{u}cker coordinates containing $Conv(F_{2},F_{3})$ must introduce edges not in the 1-skeleton of $\Delta$.  If we attempt to do so by taking tropical Pl\"{u}cker coordinates $x_{12}=x_{13}=x_{24}=x_{34}=0$ and $y_{123}=y_{124}=y_{134}=0$ and the rest positive, not all the incidence relations are satisfied.  They can be if we impose either $x_{23}=0$ or $y_{234}=0$; then we are including either $23$ in $T_{2}$ or $234$ in $T_{3}$, but in either case the matroids become concordant.
\end{ex}

So we are left with the following:\\
\\
\textbf{Possibility}:
\label{3}
Let $\{x_{I}\}$ with $(|I| = p)$, $\{y_{J}\}$ with $(|J| = q)$ satisfy the tropical Plücker relations and tropical incidence relations.  Let $\mathcal{D}$ be the subdivision induced by $\{x_{I},y_{J}\}$, and let $F$ be a cell of $\mathcal{D}$.  The following are equivalent:\\
(1) $F$ has no internal edges.\\
(2) $F$ is concordant.\\
\\
While we have some evidence and partial results that suggest the above possibility may be true, a complete proof remains elusive.

\end{document}